\newtheorem{theorem}{Theorem}[section]
\newtheorem{cor}[theorem]{Corollary}
\newtheorem{prop}[theorem]{Proposition}
\newtheorem{conjecture}{Conjecture}
\newcommand{\p}{\partial}
\newcommand{\eqnref}[1]{(\ref {#1})}
\newcommand{\Rbb}{\mathbb{R}}
\newcommand{\Kcal}{\mathcal{K}}
\newcommand{\Scal}{\mathcal{S}}
\def\Bn{{\bf n}}
\newcommand{\Ga}{\alpha}
\newcommand{\Ge}{\epsilon}
\newcommand{\Gl}{\lambda}
\newcommand{\Go}{\omega}
\newcommand{\GD}{\Delta}
\newcommand{\GL}{\Lambda}
\newcommand{\GO}{\Omega}
\newcommand{\beq}{\begin{equation}}
\newcommand{\eeq}{\end{equation}}
\numberwithin{equation}{section}
\numberwithin{figure}{section}
\begin{document}

\title{The first Hadamard variation of Neumann--Poincar\'e eigenvalues on the sphere\thanks{HK is supported by NRF 2016R1A2B4011304 and 2017R1A4A1014735}}

\author{Kazunori Ando\thanks{Department of Electrical and Electronic Engineering and Computer Science, Ehime University, Ehime 790-8577, Japan. Email: {\tt ando@cs.ehime-u.ac.jp}.}
\and Hyeonbae Kang\thanks{Department of Mathematics and Institute of Applied Mathematics, Inha University, Incheon 22212, S. Korea. Email: {\tt hbkang@inha.ac.kr}.}
\and Yoshihisa Miyanishi\thanks{Center for Mathematical Modeling and Data Science, Osaka University, Osaka 560-8531, Japan. Email: {\tt miyanishi@sigmath.es.osaka-u.ac.jp}.}
\and Erika Ushikoshi\thanks{Faculty of Environment and Information Sciences, Yokohama National University, Kanagawa 240-8501, Japan.
Email: {\tt ushikoshi-erika-ng@ynu.ac.jp}.}
}


\date{}
\maketitle

\begin{abstract}
The Neumann--Poincar\'e operator on the sphere has $\frac{1}{2(2k+1)}$, $k=0,1,2,\ldots$, as its eigenvalues and the corresponding multiplicity is $2k+1$.
We consider the bifurcation of eigenvalues under deformation of domains, and show that Frech\'et derivative of the sum of the bifurcations is zero. We then discuss
the connection of this result with some conjectures regarding the Neumann--Poincar\'e operator.
\end{abstract}

\noindent{\footnotesize {\bf AMS subject classifications}. 47A45 (primary), 31B25 (secondary)}

\noindent{\footnotesize {\bf Key words}. Neumann-Poincar\'e operator, eigenvalues, Hadamard's variation forumulas, Plasmon eigenvalues}


\section{Introduction and statement of the result}

Let $\GO$ be a bounded domain in $\Rbb^3$ with the $C^{1, \Ga}$-smooth boundary for some $\Ga>0$. The Neumann-Poincar\'e (abbreviated by NP) operator  $\Kcal_{\p \GO}^*$ on $H^{-1/2}(\p \GO)$ or on $L^2(\p \GO)$ is defined by
\beq
\Kcal_{\p \GO}^*[\psi](x) := \int_{\p \GO}
\p_{\Bn_x} E(x, y) \psi(y) \; dS_y,
\eeq
where $\p_{\Bn_x}$ is the outer normal derivative (with respect to $x$-variables) on $\p \GO$, $E$ is the fundamental solution to the Laplacian in three dimensions, namely,
\beq
E(x, y)=\frac{-1}{4\pi} \frac{1}{|x-y|},
\eeq
and $dS_y$ is the surface element.

The NP operator appears naturally when solving classical boundary value problems in terms of layer potentials. For example, its relation to the single layer potential $\Scal_{\p \GO}$, which is defined by,
\beq
\Scal_{\p\GO}[\psi](x) := \int_{\p \GO} E(x, y) \psi(y) \; dS_y, \quad x \in \Rbb^3,
\eeq
is given by the following jump relation
\beq\label{singlejump}
\p_\Bn \Scal_{\p\GO}[\psi]|_{\pm} (x) = \left( \pm \frac{1}{2} I + \Kcal_{\p \GO}^* \right)[\psi](x), \quad x \in \p\GO,
\eeq
where the subscript $\pm$ on the left-hand side respectively denotes the limit (to $\p\GO$) from the outside and inside of $\GO$.

Even though $\Kcal_{\p \GO}^*$ is {\it not} self-adjoint on $L^2(\p \GO)$, namely, $\Kcal_{\p \GO}^* \neq \Kcal_{\p \GO}$, unless $\p\GO$ is a circle or a sphere, it is shown in \cite{KPS} that $\Kcal_{\p \GO}^*$ can be symmetrized on the Sobolev space $H^{-1/2}(\p\GO)$ by introducing a new but equivalent inner product on $H^{-1/2}(\p\GO)$. Since $\p\GO$ is $C^{1,\Ga}$, $\Kcal_{\p \GO}^*$ is compact on $H^{-1/2}(\p\GO)$. So $\Kcal_{\p \GO}$, as a compact self-adjoint operator on $H^{-1/2}(\p\GO)$, has eigenvalues converging to $0$. We refer such eigenvalues as NP eigenvalues.

It is proved by Poincar\'e \cite{Poincare-AM-87} that the NP eigenvalues on a ball are $\frac{1}{2(2k+1)}$ for $k=0, 1,2,\ldots$ and their multiplicities are $2k+1$ (see also the end of section \ref{sec:Hadamard variation}). So we may enumerate them as
\beq\label{eigensphere}
\Gl_{k,l}=\frac{1}{2(2k+1)}, \quad l=-k, \ldots, k.
\eeq
In this paper, we consider the variation of the NP eigenvalues on a ball. For a small number $h$ let $\GO(h)$ be a domain obtained by perturbing the boundary $\p\GO$ in the normal direction $\Bn$ by $ha$, namely,
\beq
\p\GO(h) = \{ x + ha(x) \Bn(x)\ :\ x \in \p\GO\}.
\eeq
If $\GO(0) = \GO$ is a ball, then the NP eigenvalues $\Gl_{k,l}$ on the ball bifurcates to form the NP eigenvalues on $\p\GO(h)$, which we denote by $\Gl_{k,l}(h)$.

The purpose of this paper is to prove that the first variation of the NP eigenvalue on the ball is in equilibrium. We prove the following theorem:
\begin{theorem}\label{Sphere perturbation}
Suppose that $\p\GO$ is a sphere. Then it holds that
\beq\label{equil}
\sum_{l=-k}^k \frac{d}{dh}{\Gl_{k,l}}(0)=0, \quad k=1,2, \ldots.
\eeq
for all $a \in C^\infty(\p\GO)$.
\end{theorem}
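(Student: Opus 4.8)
The plan is to use first-order perturbation theory for the symmetrized NP operator, reducing the computation of $\sum_l \frac{d}{dh}\Gl_{k,l}(0)$ to a trace of the first variation of $\Kcal^*_{\p\GO(h)}$ restricted to the eigenspace $V_k$ associated with $\Gl_k = \frac{1}{2(2k+1)}$. Concretely, since the eigenvalue $\Gl_k$ has multiplicity $2k+1$ on the sphere, standard analytic perturbation theory (Rellich/Kato) gives that $\sum_{l=-k}^k \frac{d}{dh}\Gl_{k,l}(0)$ equals the trace, on $V_k$, of the operator obtained by differentiating the (symmetrized) NP family at $h=0$. The first task is therefore to pull back the perturbed operator $\Kcal^*_{\p\GO(h)}$ to a fixed reference surface (the sphere $\p\GO$) via the diffeomorphism $x\mapsto x+ha(x)\Bn(x)$, and to compute the $h$-derivative at $h=0$ of the resulting kernel, taking care of the variation of the normal $\Bn_x$, of the surface element $dS_y$, and of $|x-y|^{-1}$ under the displacement. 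This is the Hadamard variation of the NP kernel; on the sphere it should be expressible in closed form in terms of $a$, the spherical coordinates, and the geometry of $S^2$.

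Next I would diagonalize everything using spherical harmonics. On the unit sphere the NP operator acts as $\Gl_k$ on the space $\mathcal{H}_k$ of degree-$k$ spherical harmonics, and the single-layer potential $\Scal_{\p\GO}$ is also diagonal in this basis; these explicit formulas (the source of \eqnref{eigensphere}, recalled in section \ref{sec:Hadamard variation}) are exactly what make the trace computable. I would expand the perturbation $a\in C^\infty(S^2)$ in spherical harmonics, $a=\sum_{m,j} a_{m,j} Y_{m,j}$, and compute the matrix entries $\la (\dot{\Kcal}^*) Y_{k,l}, Y_{k,l'}\ra$ of the first variation between degree-$k$ harmonics. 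The trace over $V_k$ is then $\sum_l \la (\dot{\Kcal}^*) Y_{k,l}, Y_{k,l}\ra$, and the claim \eqnref{equil} is the statement that this trace vanishes identically in $a$. By linearity in $a$ it suffices to check this for each single spherical-harmonic mode $a=Y_{m,j}$; the relevant integrals then become products of three spherical harmonics (Gaunt/Clebsch--Gordan coefficients) together with the known radial factors, and the vanishing should follow from the selection rules and a symmetry/cancellation in the Gaunt coefficients summed over the magnetic quantum number $l$ (equivalently, from $\mathrm{SO}(3)$-invariance: the trace is a scalar-valued linear functional of $a$ that must be rotation-invariant, hence supported on the $m=0$ component, i.e. it is a multiple of $\int_{S^2} a\,dS$; one then shows that multiple is zero, e.g. by testing against the volume-preserving family or using that $k\ge 1$).

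The main obstacle I expect is the bookkeeping in the Hadamard variation of the NP kernel: correctly accounting for all three sources of $h$-dependence (the normal direction $\Bn_x$, the surface measure, and the Newtonian kernel) and then recognizing the resulting singular integral operator's action on spherical harmonics. A secondary subtlety is justifying the application of analytic perturbation theory — that the bifurcated eigenvalues $\Gl_{k,l}(h)$ are differentiable at $h=0$ and that their derivatives sum to the trace of $\dot{\Kcal}^*|_{V_k}$ — which requires the symmetrization of \cite{KPS} so that one is perturbing a self-adjoint analytic family on a fixed Hilbert space; the equivalence of the perturbed inner products with the fixed one must be tracked so that the trace is computed in the correct pairing. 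Once the variation is in hand, the actual vanishing of the trace I expect to be comparatively clean, following either from an explicit Gaunt-coefficient identity or from the rotation-invariance argument sketched above.
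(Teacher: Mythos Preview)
Your outline is correct but takes a different route from the paper. The paper does \emph{not} differentiate the NP kernel directly; instead it invokes Grieser's first-variation formula (Theorem~\ref{Main theorem1}), which already packages the Hadamard variation into the expression
\[
\frac{d}{dh}\Gl(0)=\int_{\p\GO} a\Big[(\Gl-\tfrac12)|\nabla_{\p}u|^2+(\Gl+\tfrac12)(\p_n u|_-)^2\Big]\,dS,
\]
with $u=\Scal_{\p\GO}[e(0)]$. Summing over $l$ with $u=u_{k,l}=r^kY_{k,l}$, the paper then proves, as its main technical lemma (Proposition~\ref{formulas of spherical harmonics}), the explicit identity
\[
\sum_{l=-k}^k |\nabla u_{k,l}|^2=\frac{k(2k+1)^2}{4\pi}\quad\text{on }S^2,
\]
which together with Uns\"old's theorem makes the summed integrand \emph{pointwise zero} on $S^2$; the vanishing of \eqnref{equil} follows for every $a$ without any further argument.

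Your rotation-invariance argument is a legitimate alternative to Proposition~\ref{formulas of spherical harmonics}: since the multiset $\{\Gl_{k,l}(h)\}_l$ is invariant under rigid motions of $\GO(h)$, the linear functional $a\mapsto\sum_l\frac{d}{dh}\Gl_{k,l}(0)$ is $SO(3)$-invariant, hence equals $c_k\int_{S^2}a\,dS$; testing with $a\equiv 1$ and using scale invariance of the NP spectrum gives $c_k=0$. This is more conceptual and avoids the Gaunt-coefficient bookkeeping you anticipate. What the paper's explicit route buys is the new spherical-harmonic identity itself, and a computation that shows the integrand cancels pointwise rather than only in the mean. Conversely, your plan to rederive the kernel variation from scratch is unnecessary work: Grieser's formula, quoted in Section~\ref{sec:Hadamard variation}, already supplies the first variation and the analytic dependence you flag as a subtlety.
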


The study of this paper is motivated by a well-known (and challenging) question regarding the NP eigenvalues.
One can see from \eqnref{eigensphere} that if $\p\GO$ is a sphere, then
$$
\sum_{l=-k}^k \Gl_{k,l}= \frac{1}{2}.
$$
In other words, for each positive integer $k$ there are $2k+1$ eigenvalues whose sum is $1/2$.
Martensen \cite[Theorem 1]{Ma} proved that this holds to be true for the NP eigenvalues on ellipsoids. Since NP eigenvalues on ellipsoids are given by zeros of Lam\'e polynomials, there is a canonical way for grouping eigenvalues. Regarding this question, which may properly be referred to as the $1/2$-problem, the natural class of domains to be considered would be perturbations of balls. Theorem \ref{Sphere perturbation} makes us be inclined to a positive answer to the $1/2$-problem even though it does not give a definite answer.

We also discuss about the connection of Theorem \ref{Sphere perturbation} to some other conjectures regarding NP eigenvalues: the $1/6$ conjecture and the one related to the minimal Schatten norm which are regarding an extremal property of the ball.

We prove Theorem \ref{Sphere perturbation} using the Hadamard-type variational formula for the NP eigenvalues obtained by Grieser \cite{Grieser}. We also prove and use a new summation formula for spherical harmonics, which are eigenfunctions of the NP operator on a ball.

This paper is organized as follows: In the next section we recall Grieser's variational formula. Section \ref{sec: equilibrium} is to prove Theorem \ref{Sphere perturbation}.
In section \ref{conjectures and conclusion} we discuss about two conjectures related to Theorem \ref{Sphere perturbation}.

\section{Hadamard-type variation formulas}\label{sec:Hadamard variation}

The following Hadamard-type variational formula for NP eigenvalues was derived by Grieser \cite{Grieser}. Actually, the formula derived in that paper was for plasmonic eigenvalues. We clarify the relations between plasmon eigenvalues and NP eigenvalues, after stating the formula. We emphasize that the meaning of analytic dependency in the following theorem, especially that of eigenfunctions, needs to be carefully clarified. Here we simply refer to \cite{Grieser} for precise meaning of the analytic dependency.

\begin{theorem}[The first Hadamard variation formula \cite{Grieser}]\label{Main theorem1}
Let $\Gl \not=0, 1/2$ be an eigenvalue of the NP operator $\Kcal^*_{\p \GO}$ with eigenspace $E$. Then there are $h_0>0$ and real analytic functions $h\mapsto \Gl^{(i)}(h)$, $h\mapsto e^{(i)}(h)$ defined for $|h|<h_0$, $i=1, \ldots, {\dim}\ E$, such that for each $h$ the numbers $\Gl^{(i)}(h)$, $i=1, \ldots, {\rm dim}\ E$, are eigenvalues of $\Kcal^*_{\p\GO(h)}$ with eigenfunctions $e^{(i)}(h)$ and $\{e^{(1)}(0), \ldots, e^{{\rm dim} E}(0)\}$ is a basis of $E$.

For a fixed analytic branch $\Gl(h)$, $e(h)$ of eigenvalues and eigenfunctions with
$\Gl(0)=\Gl$ and $\Vert \nabla u(h) \Vert_{L^2(\GO(h))}=\Vert \nabla\Scal_{\p\GO(h)}[e(h)] \Vert_{L^2(\GO(h))}=1$ for each $h$,
we have
\beq\label{First variation formula}
\frac{d}{dh}\Gl(0)=\int_{\p \GO} a\Big[\Big(\Gl-\frac{1}{2} \Big)|\nabla_{\p} u|^2+\Big(\Gl+\frac{1}{2} \Big) (\p_n u|_-)^2\Big] dS.
\eeq
Here $u=\Scal_{\p\GO}[e(0)]$ and the index $-$ indicates the limits (to $\p\GO$) from $\GO$. $\nabla_{\p}$ and $\p_n$ denote the decomposition of the standard (Euclidean)  gradient $\nabla$, namely, $\nabla u=\nabla_{\p} u+ (\p_n u){\bf n}$ on $\p\GO$.
\end{theorem}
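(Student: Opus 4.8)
The plan is to reduce the NP eigenvalue to a Rayleigh-type quotient built from interior and exterior Dirichlet energies, transport the perturbed problem back to the fixed boundary, and extract $\frac{d}{dh}\Gl(0)$ through a Pohozaev/energy-momentum identity. First I would record the equivalence between NP eigenpairs and the associated transmission (plasmon) problem: if $\Kcal_{\p\GO}^*[\psi]=\Gl\psi$ and $u=\Scal_{\p\GO}[\psi]$ with $\psi=e(0)$, then $u$ is harmonic in $\GO$ and in $\GO^c:=\Rbb^3\setminus\ol\GO$, decays at infinity, is continuous across $\p\GO$, and by \eqnref{singlejump} satisfies $\p_n u|_\pm=(\Gl\pm\tfrac12)\psi$. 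Writing $D_i=\int_\GO|\nabla u|^2$ and $D_e=\int_{\GO^c}|\nabla u|^2$, Green's identity together with these jump relations gives $D_i=(\tfrac12-\Gl)(D_i+D_e)$ and $D_e=(\tfrac12+\Gl)(D_i+D_e)$, hence the variational identity
\[
\Gl=\frac12\,\frac{D_e-D_i}{D_e+D_i}.
\]
Crucially, $u$ is a \emph{critical point} of the right-hand side among functions harmonic in each region, so the first variation of $\Gl$ will only feel the explicit motion of the domain.

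Next I would transport the perturbed eigenfunction to the fixed boundary. Choosing a vector field $V$ with $V=a\Bn$ on $\p\GO$ and setting $\Phi_h=\mathrm{id}+hV$, I write $\tilde u(h)=u_h\circ\Phi_h$ and pull the energies back, $D_i(h)=\int_\GO A_h\nabla\tilde u(h)\cdot\nabla\tilde u(h)$ with $A_h=J_h(D\Phi_h)^{-1}(D\Phi_h)^{-T}$, and likewise for $D_e(h)$ over $\GO^c$. Differentiating at $h=0$ (where $A_0=I$) splits $\delta D_i,\delta D_e$ into a \emph{material-derivative} part carrying $\dot{\tilde u}$ and a \emph{metric} part carrying $\dot A=(\div V)I-DV-DV^{T}$. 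Substituting into $\delta\Gl=\frac{1}{2(D_i+D_e)}[(1-2\Gl)\delta D_e-(1+2\Gl)\delta D_i]$, the $\dot{\tilde u}$ terms integrate by parts (using $\Delta u=0$ and continuity of $\dot{\tilde u}$ across $\p\GO$) to boundary pairings against $\p_n u|_\pm$; the jump relations then make their combined coefficient $(1-2\Gl)(\Gl+\tfrac12)+(1+2\Gl)(\Gl-\tfrac12)=0$. This Feynman--Hellmann cancellation is what makes the final formula depend only on $u=\Scal_{\p\GO}[e(0)]$ and not on the unknown variation of the eigenfunction.

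For the surviving metric terms I would use the pointwise identity, valid for harmonic $u$,
\[
\dot A\,\nabla u\cdot\nabla u=\div\big(|\nabla u|^2\,V-2(\nabla u\cdot V)\nabla u\big),
\]
so that $\int_\GO\dot A\,\nabla u\cdot\nabla u=\int_{\p\GO}a\,[\,|\nabla_\p u|^2-(\p_n u|_-)^2\,]\,dS$ and, with the exterior outward normal $-\Bn$, $\int_{\GO^c}\dot A\,\nabla u\cdot\nabla u=-\int_{\p\GO}a\,[\,|\nabla_\p u|^2-(\p_n u|_+)^2\,]\,dS$. Assembling these, decomposing $|\nabla u|^2=|\nabla_\p u|^2+(\p_n u)^2$, inserting $\p_n u|_\pm=(\Gl\pm\tfrac12)\psi$, and normalizing so that $D_i=\|\nabla u\|_{L^2(\GO)}^2=1$ (which fixes $D_i+D_e=1/(\tfrac12-\Gl)$), a short algebraic simplification collapses the tangential contribution to $(\Gl-\tfrac12)|\nabla_\p u|^2$ and the normal one to $(\Gl+\tfrac12)(\p_n u|_-)^2$, yielding \eqnref{First variation formula}.

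The main obstacle is the metric reduction rather than the bookkeeping. Because $u$ has a genuine jump in its normal derivative across $\p\GO$, the naive Reynolds transport boundary term $|\nabla u|^2|_\pm\,a$ produces the \emph{wrong} sign on the normal part; one must pass through the divergence (stress-tensor) identity above to recover the correct $|\nabla_\p u|^2-(\p_n u)^2$ combination. Making the transport argument rigorous also requires the analytic dependence of $\Gl(h)$ and $e(h)$ supplied by the cited framework of \cite{Grieser}, the continuity of the material derivative $\dot{\tilde u}$ across the moving interface, and control of the decay at infinity so that every integration by parts over the unbounded region $\GO^c$ is justified (for instance by working in homogeneous Sobolev spaces).
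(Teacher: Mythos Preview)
The paper does not actually prove this theorem: it is stated as a citation from \cite{Grieser}, with only the translation between plasmonic and NP eigenvalues (relation \eqnref{relation plasmon and NP eigenvalues1}) supplied afterward. So there is no ``paper's own proof'' to compare against; your proposal is a self-contained derivation of a result the authors take as input.

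That said, your outline is essentially the standard route to this formula and is correct in its architecture. The identification $\Gl=\tfrac12\,(D_e-D_i)/(D_e+D_i)$ together with the criticality of $u$ is exactly what drives the Feynman--Hellmann cancellation you describe, and the stress-tensor/Pohozaev identity is the right tool to convert the bulk metric variation into the boundary combination $|\nabla_\p u|^2-(\p_n u)^2$. Two small points worth tightening if you flesh this out: first, the criticality statement needs the constraint class spelled out (continuity across $\p\GO$ but no harmonicity imposed, or harmonicity imposed and only the trace varied), since the cancellation mechanism differs slightly in the two setups; second, the final algebraic collapse uses $\p_n u|_+=\tfrac{\Gl+1/2}{\Gl-1/2}\,\p_n u|_-$ to eliminate the exterior normal derivative, and this together with the normalization $D_i=1$ is where the clean coefficients $(\Gl\mp\tfrac12)$ emerge---it is worth writing that step explicitly rather than calling it ``short.'' The caveats you flag (analytic dependence from \cite{Grieser}, decay at infinity for the exterior integrations by parts) are the right ones.
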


A real number $\Ge$ is called a {\it plasmonic eigenvalue} if the following problem admits a solution $u$ in the space $H^1(\Rbb^3)$:
\beq\label{plasmon}
\begin{cases}
\GD u =0 \quad\quad\quad\quad\, &\text{in}\  {\Rbb}^3\backslash \p\GO ,\\
u|_{-}=u|_{+} \quad\quad\quad\ &\text{on}\ \p\GO ,\\
\Ge\p_n u|_{-}= -\p_n u|_{+} \quad\ \, &\text{on}\ \p\GO.
\end{cases}
\eeq

Write the solution $u$ to \eqnref{plasmon} as $u(x) = \Scal_{\p \GO}[e](x)$ for some function $e \in H^{-1/2}(\p\GO)$ with $\int_{\p\GO} e dS=0$. Then the first and the second conditions in \eqnref{plasmon} are automatically fulfilled. One can see from \eqnref{singlejump} that the third condition is equivalent to
$$
\Ge \left( -\frac{1}{2} I + \Kcal_{\p \GO}^* \right)[e] = \left( \frac{1}{2} I + \Kcal_{\p \GO}^* \right)[e]
$$
that is,
$$
\Kcal_{\p \GO}^* [e] = \Gl e,
$$
where
\beq\label{relation plasmon and NP eigenvalues1}
\Ge=\frac{-\Gl-1/2}{\Gl-1/2} \quad\mbox{or}\quad \Gl=\frac{\Ge-1}{2(\Ge+1)}.
\eeq
The relation between the plasmonic eigenvalue $\Ge$ and the NP eigenvalue $\Gl$ is given by \eqnref{relation plasmon and NP eigenvalues1}.

The relation \eqnref{relation plasmon and NP eigenvalues1} shows that the plasmonic eigenvalues on the sphere is also in equilibrium. In fact, let $\Ge_{k,l}$ be the plasmonic eigenvalues on the sphere $\p\GO$ and $\Ge_{k,l}(h)$ be those on $\p\GO(h)$. Then one can see easily from \eqnref{relation plasmon and NP eigenvalues1} that
$$
\frac{d}{dh} \Ge_{k,l} (0)= \frac{1}{(\Gl_{k,l}-1/2)^2} \frac{d}{dh} \Gl_{k,l} (0).
$$
Thus we obtain the following corollary from Theorem \ref{Sphere perturbation}:
\begin{cor}
Suppose that $\p\GO$ is a sphere. Then it holds that
\beq\label{equil2}
\sum_{l=-k}^k \frac{d}{dh}{\Ge_{k,l}}(0)=0, \quad k=1,2, \ldots.
\eeq
for all $a \in C^\infty(\p\GO)$.
\end{cor}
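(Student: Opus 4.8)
The plan is to obtain the Corollary as an immediate consequence of Theorem~\ref{Sphere perturbation}, so that essentially all the substance is already in place; the only new ingredient is the algebraic correspondence \eqnref{relation plasmon and NP eigenvalues1} between plasmonic and NP eigenvalues. First I would record that the map $\lambda \mapsto \epsilon = \frac{-\lambda-1/2}{\lambda-1/2}$ is a M\"obius transformation that is real-analytic and invertible on a neighbourhood of each NP eigenvalue $\lambda_{k,l} = \frac{1}{2(2k+1)}$ with $k\geq 1$, since for such $k$ the value $\lambda_{k,l}$ (at most $\tfrac16$) stays away from the pole $\lambda = 1/2$. Applying this transformation to the analytic branch $h\mapsto \lambda_{k,l}(h)$ supplied by Theorem~\ref{Main theorem1} yields exactly the analytic branch $h\mapsto \epsilon_{k,l}(h)$ of plasmonic eigenvalues of $\partial\Omega(h)$; in particular the bifurcation picture on the plasmonic side matches that on the NP side, and the labelling by $l$ is consistent between the two problems.

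Next I would differentiate the identity $\epsilon_{k,l}(h) = \frac{-\lambda_{k,l}(h)-1/2}{\lambda_{k,l}(h)-1/2}$ at $h=0$. Since $\frac{d}{d\lambda}\,\frac{-\lambda-1/2}{\lambda-1/2} = \frac{1}{(\lambda-1/2)^2}$, the chain rule gives
\[
\frac{d}{dh}\epsilon_{k,l}(0) = \frac{1}{(\lambda_{k,l}-1/2)^2}\,\frac{d}{dh}\lambda_{k,l}(0),
\]
which is precisely the relation recorded just before the Corollary. The crucial observation is that on the sphere $\lambda_{k,l}$ does not depend on $l$: for fixed $k$ it equals $\frac{1}{2(2k+1)}$, hence $\lambda_{k,l}-1/2 = -\frac{k}{2k+1}$, so the prefactor $\frac{1}{(\lambda_{k,l}-1/2)^2}$ equals $\left(\frac{2k+1}{k}\right)^2$, a constant $C_k$ depending only on $k$ (finite and nonzero precisely because $k\geq 1$, which is why the case $k=0$ is excluded).

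Finally I would factor this constant out of the sum:
\[
\sum_{l=-k}^k \frac{d}{dh}\epsilon_{k,l}(0) \;=\; C_k \sum_{l=-k}^k \frac{d}{dh}\lambda_{k,l}(0) \;=\; 0,
\]
the last equality being the content of Theorem~\ref{Sphere perturbation}. I do not anticipate any genuine obstacle in this argument; the real difficulty is confined to Theorem~\ref{Sphere perturbation}, which here is taken for granted. The only two points deserving a careful word are those flagged above: that the deformed eigenvalues $\lambda_{k,l}(h)$ avoid the value $1/2$ for $h$ near $0$ (immediate from continuity of the branches once $k\geq 1$), and that the M\"obius map carries analytic branches to analytic branches, so that the index $l$ keeps the same meaning for $\epsilon_{k,l}(h)$ as it has for $\lambda_{k,l}(h)$.
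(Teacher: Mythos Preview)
Your proposal is correct and follows essentially the same route as the paper: differentiate the M\"obius relation \eqnref{relation plasmon and NP eigenvalues1} to obtain $\frac{d}{dh}\Ge_{k,l}(0)=(\Gl_{k,l}-1/2)^{-2}\frac{d}{dh}\Gl_{k,l}(0)$, note that the prefactor is independent of $l$ since $\Gl_{k,l}=\frac{1}{2(2k+1)}$, and then invoke Theorem~\ref{Sphere perturbation}. Your added remarks about analyticity of the branches away from $\Gl=1/2$ and the exclusion of $k=0$ are sound elaborations, but the core argument is the same as the paper's.
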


Using the relation \eqnref{relation plasmon and NP eigenvalues1} one can easily find the NP eigenvalues on the sphere. Let $Y_{k,l}$ be a spherical harmonic of order $k$ and let
$$
u(x):=
\begin{cases}
r^k Y_{k,l}(\Go) \quad&\mbox{if } r<1, \\
r^{-k-1} Y_{k,l}(\Go) \quad&\mbox{if } r>1.
\end{cases}
$$
Then $u$ is a solution to \eqnref{plasmon} with $\Ge= \frac{k+1}{k}$. Thus we see from \eqnref{relation plasmon and NP eigenvalues1} that $\frac{1}{2(2k+1)}$ is an NP eigenvalue.

\section{Proof of Theorem \ref{Sphere perturbation}}\label{sec: equilibrium}

The eigenfunctions corresponding the NP eigenvalues on the sphere are spherical harmonics. It is convenient to use the following complex form of spherical harmonics:
\beq
u_{k, l}=r^k Y_{k, l}= C_{k, l} \sum_{\substack{p+q+s=k \\ p-q=l
\\ p, q, s\geq 0}} \frac{1}{p! q! s!}\left( -\frac{w}{2}\right)^p \left(\frac{\overline{w}}{2}\right)^q z^s,
\eeq
where
\beq
C_{k, l}=\sqrt{\frac{2k+1}{4\pi}(k+l)!(k-l)!},
\eeq
and $w=x+iy$ and $\overline{w}=x-iy$ (see \cite{VMK}). The following identity is known as Uns\"old's theorem (see \cite{WW}):
\beq\label{unsold}
\sum_{l=-k}^k |Y_{k, l}|^2=\frac{2k+1}{4\pi}.
\eeq
We prove the following proposition, which is new to the best of our knowledge.
\begin{prop}\label{formulas of spherical harmonics}
\beq
\sum_{l=-k}^k |\nabla u_{k, l}(x)|^2=\frac{k(2k+1)^2}{4\pi},  \quad x\in S^2.
\eeq
\end{prop}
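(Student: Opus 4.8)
The plan is to split the ambient gradient of the solid harmonic $u_{k,l}=r^kY_{k,l}$ into its radial and tangential parts on $S^2$, and then to evaluate the two resulting sums separately: the first directly from Uns\"old's theorem \eqnref{unsold}, and the second by applying the Laplace--Beltrami operator $\GD_{S^2}$ of the unit sphere to \eqnref{unsold}. First I would record the gradient decomposition. Since $u_{k,l}$ is a harmonic polynomial homogeneous of degree $k$, Euler's identity gives $x\cdot\nabla u_{k,l}=k\,u_{k,l}$, and since the outward unit normal on $S^2$ is $\Bn(x)=x$ this means $\p_n u_{k,l}=kY_{k,l}$ on $S^2$. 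Writing the ambient gradient in spherical coordinates and evaluating at $r=1$ gives $\nabla u_{k,l}=kY_{k,l}\,\Bn+\nabla_{S^2}Y_{k,l}$ on $S^2$, where $\nabla_{S^2}$ is the intrinsic gradient on $S^2$ and we used $u_{k,l}|_{S^2}=Y_{k,l}$; by orthogonality of the radial and tangential directions, $|\nabla u_{k,l}|^2=k^2|Y_{k,l}|^2+|\nabla_{S^2}Y_{k,l}|^2$ on $S^2$. Summing over $l$ then reduces the proposition to evaluating $\sum_{l}|Y_{k,l}|^2$ and $\sum_{l}|\nabla_{S^2}Y_{k,l}|^2$ on $S^2$.

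The first of these equals $\frac{2k+1}{4\pi}$ by \eqnref{unsold}. For the second, the key step is to apply $\GD_{S^2}$ to the identity $\sum_{l}|Y_{k,l}|^2\equiv\frac{2k+1}{4\pi}$, whose right-hand side is constant. Combining the product rule for $\GD_{S^2}$ with the eigenvalue relation $\GD_{S^2}Y_{k,l}=-k(k+1)Y_{k,l}$ (equivalently, the harmonicity of $u_{k,l}$; note also $\GD_{S^2}\overline{Y_{k,l}}=-k(k+1)\overline{Y_{k,l}}$ since $\GD_{S^2}$ has real coefficients) gives $\GD_{S^2}|Y_{k,l}|^2=2|\nabla_{S^2}Y_{k,l}|^2-2k(k+1)|Y_{k,l}|^2$. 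Summing over $l$ and using that the left-hand side vanishes yields $\sum_{l}|\nabla_{S^2}Y_{k,l}|^2=\frac{k(k+1)(2k+1)}{4\pi}$.

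Putting the two pieces together,
\beq
\sum_{l=-k}^k|\nabla u_{k,l}|^2=\frac{2k+1}{4\pi}\Big(k^2+k(k+1)\Big)=\frac{2k+1}{4\pi}\,k(2k+1)=\frac{k(2k+1)^2}{4\pi},
\eeq
which is the assertion. I do not anticipate a real obstacle; the argument is elementary, and the only places needing a little care are the book-keeping with complex conjugates in the product-rule computation and the (standard) fact that on the unit sphere the tangential component of the ambient gradient of a function coincides with the intrinsic gradient of its restriction. If one preferred to avoid differentiating \eqnref{unsold}, an alternative for the second sum is to observe that $\sum_{l}|\nabla_{S^2}Y_{k,l}|^2$ is invariant under rotations of $S^2$, hence constant, and to compute the constant from $\int_{S^2}|\nabla_{S^2}Y_{k,l}|^2\,dS=k(k+1)\int_{S^2}|Y_{k,l}|^2\,dS=k(k+1)$ via Green's identity; this is a little longer.
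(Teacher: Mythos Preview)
Your argument is correct and takes a genuinely different route from the paper's. The paper works in the Cartesian complex coordinates $w=x+iy$, $\overline{w}$, $z$ and uses the explicit generating-function representation of $u_{k,l}$ to compute each of $\partial u_{k,l}/\partial w$, $\partial u_{k,l}/\partial\overline{w}$, $\partial u_{k,l}/\partial z$ as a constant multiple of a \emph{lower}-degree solid harmonic $r^{k-1}Y_{k-1,l\mp1}$ or $r^{k-1}Y_{k-1,l}$; it then sums the resulting expressions over $l$ and applies Uns\"old's theorem \eqnref{unsold} at degree $k-1$. Your approach instead splits $\nabla u_{k,l}$ on $S^2$ into its radial and tangential parts, uses Uns\"old at degree $k$ for the radial piece, and---this is the key difference---obtains the tangential piece by applying $\GD_{S^2}$ to Uns\"old's identity and invoking the eigenvalue relation $\GD_{S^2}Y_{k,l}=-k(k+1)Y_{k,l}$. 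The paper's computation yields more as a by-product (the explicit ladder identities for the Cartesian derivatives), while yours is shorter, coordinate-free, and uses Uns\"old only at the single degree $k$; your rotation-invariance alternative for $\sum_l|\nabla_{S^2}Y_{k,l}|^2$ is also valid and makes the same point.
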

\begin{proof}
We first note that
\beq
|\nabla u|^2=2 \left[ \Big|\frac{\p u}{\p w} \Big|^2 + \Big| \frac{\p u}{\p \overline{w}} \Big|^2 \right]+ \Big|\frac{\p u}{\p z} \Big|^2.
\eeq
We then have
\begin{align*}
\frac{\p u_{k, l}}{\p w}&=-\frac{C_{k, l}}{2} \sum_{\substack{p+q+s=k \geq 0 \\ p-q=l \geq 0
\\ p\geq 1 \\ q, s \geq 0}} \frac{1}{(p-1)! q! s!}\left( -\frac{w}{2}\right)^{p-1}
\left(\frac{\overline{w}}{2}\right)^q z^s \\
&=-\frac{C_{k, l}}{2 C_{k-1, l-1}} \cdot C_{k-1, l-1} \sum_{\substack{\tilde{p}+q+s=k-1 \\ \tilde{p}-q=l-1 \\ \tilde{p}, q, s \geq 0
}} \frac{1}{\tilde{p}! q! s!}\left( -\frac{w}{2}\right)^{\tilde{p}} \left(\frac{\overline{w}}{2}\right)^q z^s \\
&=-\frac{C_{k, l}}{2 C_{k-1, l-1}} r^{k-1} Y_{k-1, l-1}=-\frac{\sqrt{(k+l-1)(k+l)(2k+1)}}{2\sqrt{2k-1}} r^{k-1} Y_{k-1, l-1}.
\end{align*}
Similarly, we have
\begin{align*}
\frac{\p u_{k, l}}{\p \overline{w}}&=\frac{C_{k, l}}{2} \sum_{\substack{p+q+s=k \geq 0 \\ p-q=l \geq 0 \\ q \geq 1 \\ p, s \geq 0}} \frac{1}{p! (q-1)! s!}\left( -\frac{w}{2}\right)^{p}
\left(\frac{\overline{w}}{2}\right)^{q-1} z^s \\
&=\frac{C_{k, l}}{2 C_{k-1, l+1}} \cdot C_{k-1, l+1}
\sum_{\substack{p+\tilde{q}+s=k-1 \\ p-\tilde{q}=l+1 \\ p, \tilde{q}, s \geq 0}} \frac{1}{p! \tilde{q}! s!}\left(-\frac{w}{2}\right)^{p} \left(\frac{\overline{w}}{2}\right)^{\tilde{q}} z^s \\
&=\frac{C_{k, l}}{2 C_{k-1, l+1}} r^{k-1} Y_{k-1, l+1}=\frac{\sqrt{(k-l-1)(k-l)(2k+1)}}{2\sqrt{2k-1}} r^{k-1} Y_{k-1, l+1}.
\end{align*}
Finally we have
\begin{align*}
\frac{\p u_{k, l}}{\p z}&={C_{k, l}} \sum_{\substack{p+q+s=k \geq 0 \\ p-q=l \geq 0
\\ s\geq 1 \\ p, q \geq 0}} \frac{1}{p! q! (s-1)!}\left( -\frac{w}{2}\right)^{p}
\left(\frac{\overline{w}}{2}\right)^q z^{s-1} \\
&=\frac{C_{k, l}}{C_{k-1, l}} \cdot C_{k-1, l} \sum_{\substack{p+q+\tilde{s}=k-1 \\ p-q=l \\ p, q, \tilde{s} \geq 0
}} \frac{1}{p! q! \tilde{s}!}\left( -\frac{w}{2}\right)^{p} \left(\frac{\overline{w}}{2}\right)^q z^{\tilde s} \\
&=\frac{C_{k, l}}{C_{k-1, l}} r^{k-1} Y_{k-1, l}=\frac{\sqrt{(k+l)(k-l)(2k+1)}}{\sqrt{2k-1}} r^{k-1} Y_{k-1, l}.
\end{align*}
It then follows that
\begin{align*}
\sum_{l=-k}^k |\nabla u_{k, l}|^2 =&\sum_{l=-k}^k 2 \left[ \Big|\frac{\p u_{k, l}}{\p w} \Big|^2 + \Big| \frac{\p u_{k, l}}{\p \overline{w}} \Big|^2 \right]+ \Big|\frac{\p u_{k, l}}{\p z} \Big|^2 \\
=& \sum_{l=-k}^k 2 \Big[ \frac{(k+l-1)(k+l)(2k+1)}{4(2k-1)}|Y_{k-1, l-1}|^2 \\
& \quad\quad + \frac{(k-l-1)(k-l)(2k+1)}{4(2k-1)}|Y_{k-1, l+1}|^2 \Big] \\
&\quad + \sum_{l=-k}^k \frac{(k+l)(k-l)(2k+1)}{2k-1}|Y_{k-1, l}|^2 .
\end{align*}
Renumbering $l-1$ to $l$ in the first term above and $l+1$ to $l$ in the second, we have
\begin{align*}
\sum_{l=-k}^k |\nabla u_{k, l}|^2
=& \sum_{l=-k+1}^{k-1} \Big[ 2\left\{\frac{(k+l)(k+l+1)(2k+1)}{4(2k-1)}+
\frac{(k-l)(k-l+1)(2k+1)}{4(2k-1)} \right\} \\
&\quad\quad\quad +\frac{(k+l)(k-l)(2k+1)}{2k-1}\Big]|Y_{k-1, l}|^2 \\
=& \sum_{l=-k+1}^{k-1} \frac{k(2k+1)^2}{2k-1}|Y_{k-1, l}|^2 \\
=& \frac{k(2k+1)^2}{2k-1} \cdot \frac{2k-1}{4\pi} =\frac{k(2k+1)^2}{4\pi}.
\end{align*}
Here we used the Uns\"old's theorem for the second to the last equality.
\end{proof}

\begin{proof}[Proof of Theorem \ref{Sphere perturbation}]
One can easily see that
\beq\label{1000}
\Scal_{\p\GO}[Y_{k, l}](x) = -\frac{k}{2k+1} r^k Y_{k, l} = -\frac{k}{2k+1} u_{k,l}(x), \quad x \in \GO.
\eeq
Choose $C_k$ so that
$$
C_k^2 \int_{\GO} |\Scal_{\p\GO}[Y_{k, l}](x) |^2 dx=1.
$$
Then, according to \eqnref{First variation formula} and \eqnref{1000}, we have
\begin{align*}
\sum_{l=-k}^k \frac{d\Gl_{k, l}}{dh}(0) &= \left( \frac{C_k k}{2k+1} \right)^2 \sum_{l=-k}^k \int_{S^2}  a
\Big[\frac{-k}{2k+1}|\nabla_\p u_{k, l}|^2 + \frac{k+1}{2k+1}(\p_n u_{k, l}|_-)^2 \Big]\ dS \\
&= \left( \frac{C_k k}{2k+1} \right)^2 \sum_{l=-k}^k \int_{S^2}  a
\Big[\frac{-k}{2k+1}|\nabla u_{k, l}|^2 + (\p_n u_{k, l}|_-)^2 \Big]\ dS. 
\end{align*}
Since $\p_n u_{k,l}|_- =kY_{k, l}$ on $S^2$, it follows from \eqnref{unsold} and \eqnref{formulas of spherical harmonics} that
\begin{align*}
\sum_{l=-k}^k \frac{d\Gl_{k, l}}{dh}(0) &= \left( \frac{C_k k}{2k+1} \right)^2 \int_{S^2}  a  \sum_{l=-k}^k
\Big[-\frac{k}{(2k+1)}|\nabla u_{k, l}|^2 + k^2 |Y_{k, l}|^2 \Big]\ dS \\
&= \left( \frac{C_k k}{2k+1} \right)^2 \int_{S^2}  a \Big[-\frac{k}{(2k+1)}\cdot\frac{k(2k+1)^2}{4\pi} + k^2\cdot \frac{2k+1}{4\pi} \Big]\ dS =0,
\end{align*}
as desired.
\end{proof}

\section{Some conjectures}\label{conjectures and conclusion}

We now discuss about the connection of Theorem \ref{Sphere perturbation} with two conjectures raised in \cite{Miyanishi:2015aa}.

\begin{conjecture}[$1/6$-conjecture] \label{1/6 conjecture}
This conjecture claims that
\beq
{\rm max}\; \left(\sigma(\Kcal_{\p\GO})
\backslash \left\{\frac{1}{2} \right\} \right) \ge \frac{1}{6},
\eeq
for any $C^{\infty}$ simply connected closed surfaces $\p\GO$, and the equality is achieved if and only if $\p \GO$ is a sphere.
\end{conjecture}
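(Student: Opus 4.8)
To prove Conjecture~\ref{1/6 conjecture} I would combine a Rayleigh--quotient description of $\Gl_1(\GO):=\max\big(\sigma(\Kcal_{\p\GO})\setminus\{1/2\}\big)$ with the eigenvalue clustering of which Theorem~\ref{Sphere perturbation} is the first-order manifestation. For a connected $C^\infty$ boundary the value $1/2$ is a simple eigenvalue of $\Kcal^*_{\p\GO}$, and on its orthogonal complement for the symmetrizing inner product $\la\Gvf,\Gy\ra_*:=-\la\Scal_{\p\GO}[\Gvf],\Gy\ra_{L^2(\p\GO)}$ one has
\beq\label{rayleigh}
\Gl_1(\GO)=\max_{\substack{\Gvf\in H^{-1/2}(\p\GO)\\ \int_{\p\GO}\Gvf\,dS=0}}\frac{\la\Kcal^*_{\p\GO}\Gvf,\Gvf\ra_*}{\la\Gvf,\Gvf\ra_*}
=\max_{u}\ \frac{\Dcal_e[u]-\Dcal_i[u]}{2\big(\Dcal_e[u]+\Dcal_i[u]\big)},
\eeq
where $u$ runs over functions harmonic in $\GO$ and in $\Rbb^3\setminus\ol{\GO}$, continuous across $\p\GO$, decaying at infinity and of zero net charge, $\Dcal_i[u]=\int_{\GO}|\nabla u|^2$, $\Dcal_e[u]=\int_{\Rbb^3\setminus\ol{\GO}}|\nabla u|^2$, and $\Dcal_i[u]+\Dcal_e[u]=\frac{1}{4\pi}\iint_{\p\GO\times\p\GO}\frac{\Gvf(x)\Gvf(y)}{|x-y|}\,dS_x\,dS_y$ when $u=\Scal_{\p\GO}[\Gvf]$. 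Cross-multiplying, the inequality part of the conjecture is \emph{exactly} the assertion that for every admissible $\GO$ there is a single trial $u$ with $\Dcal_e[u]\ge 2\,\Dcal_i[u]$; on the sphere equality occurs precisely when $u$ is the restriction of a linear function, which identifies the extremal profile.

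The primary route is then direct: take the trial density $\Gvf_{\Bc}(x)=\Bc\cdot(x-x_G)$ on $\p\GO$, where $x_G=|\p\GO|^{-1}\int_{\p\GO}x\,dS_x$ is the surface centroid (so that $\int_{\p\GO}\Gvf_{\Bc}\,dS=0$ automatically), and set $u_{\Bc}=\Scal_{\p\GO}[\Gvf_{\Bc}]$. Here $\Dcal_i[u_{\Bc}]+\Dcal_e[u_{\Bc}]$ is the concrete geometric double integral above, while $\Dcal_i[u_{\Bc}]=\int_{\GO}|\nabla u_{\Bc}|^2=\la\Scal_{\p\GO}[\Gvf_{\Bc}],(-\tfrac12 I+\Kcal^*_{\p\GO})[\Gvf_{\Bc}]\ra$ must be bounded from above, e.g. by the Dirichlet energy of a convenient competitor sharing the trace $\Scal_{\p\GO}[\Gvf_{\Bc}]|_{\p\GO}$. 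Averaging over $\Bc$ in the unit sphere collapses the three energies into moments of $\p\GO$, and one checks that the averaged inequality $\la\Dcal_e\ra_{\Bc}\ge 2\la\Dcal_i\ra_{\Bc}$ is an identity on a ball, so one hopes it survives as a strict inequality otherwise. A parallel, purely local route uses this paper directly: for $\GO(h)$ close to a ball the three eigenvalues $\Gl_{1,l}(h)$ are exactly the top three below $1/2$, so $\Gl_1(\GO(h))=\max_l\Gl_{1,l}(h)\ge\frac13\sum_l\Gl_{1,l}(h)$, and since $\frac{d}{dh}\sum_l\Gl_{1,l}(0)=0$ by Theorem~\ref{Sphere perturbation}, a nonnegative second variation of $h\mapsto\sum_l\Gl_{1,l}(h)$ (strictly positive unless $\GO(h)$ is again a ball) would yield $\Gl_1(\GO(h))\ge\frac16$ in a neighbourhood of the ball.

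For the rigidity part one argues that $\Gl_1(\GO)=1/6$ forces the maximiser $\Gvf$ in \eqnref{rayleigh} to be a genuine eigenfunction at the exact value $1/6$ with $\Dcal_e[u]=2\,\Dcal_i[u]$ for the associated dipole-type potential $u=\Scal_{\p\GO}[\Gvf]$, and that this over-determined transmission data must force $\p\GO$ to be a sphere --- e.g. by feeding it into Grieser's first variation formula \eqnref{First variation formula} to conclude the sphere is the only critical configuration, by a moving-plane/symmetrisation argument for the two-phase problem, or by the rigidity of Martensen's eigenvalue-sum identity. Globalising local minimality near the ball to all simply connected closed surfaces needs a compactness argument: $\GO\mapsto\Gl_1(\GO)$ is continuous and the normalised constraint class should be precompact, provided a uniform bound $\Gl_1(\GO)\ge c>0$ is available to prevent the minimising sequence from escaping to degenerate surfaces.

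The decisive obstacle is the control of the \emph{interior} energy $\Dcal_i[\Scal_{\p\GO}[\Gvf]]=\int_{\GO}|\nabla\Scal_{\p\GO}[\Gvf]|^2$ of the single-layer potential of a fixed elementary density, uniformly over all simply connected surfaces: this quantity depends on $\p\GO$ in a genuinely nonlocal way, and the crude linear trial density will in fact violate $\Dcal_e\ge 2\,\Dcal_i$ on sufficiently elongated or pinched domains, so one likely needs a domain-adapted density (quadratic profiles, an optimised centre and direction $\Bc$, or a true two- or three-dimensional min--max reaching the top of the spectrum instead of a single quotient) or a global monotonicity/continuity deformation of $\GO$ to a ball. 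The rigidity statement is a second, largely independent difficulty, since the over-determined transmission problem produced by the equality case does not obviously reduce to a known symmetry theorem.
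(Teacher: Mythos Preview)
The statement you are addressing is a \emph{conjecture}: the paper does not prove it. Section~\ref{conjectures and conclusion} merely records the conjecture and observes that Theorem~\ref{Sphere perturbation} yields the vanishing of the first variation $\GL'(0)=0$ for $\GL(h)=\sum_{l=-1}^1\Gl_{1,l}(h)$, which is a necessary condition for the sphere to be extremal but nothing more. There is therefore no ``paper's own proof'' against which to compare.

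Your proposal is likewise not a proof, and to your credit you say so. The variational reformulation is correct: from the plasmonic relation $\Ge=\Dcal_e/\Dcal_i$ together with \eqnref{relation plasmon and NP eigenvalues1} one obtains $\Gl=(\Dcal_e-\Dcal_i)/\big(2(\Dcal_e+\Dcal_i)\big)$, so $\Gl_1(\GO)\ge 1/6$ is indeed equivalent to the existence of a zero-charge trial potential with $\Dcal_e\ge 2\Dcal_i$. From that point on, however, the argument is only programmatic. The linear density $\Gvf_{\Bc}(x)=\Bc\cdot(x-x_G)$ is the natural first guess, but you yourself concede it can fail on elongated domains; the averaged energies $\la\Dcal_e\ra_{\Bc}$ and $\la\Dcal_i\ra_{\Bc}$ do not reduce to tractable surface moments precisely because the interior energy $\Dcal_i[\Scal_{\p\GO}[\Gvf_{\Bc}]]$ depends nonlocally on $\p\GO$, which is the obstacle you identify. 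The ``local'' route via a second variation is exactly what the paper leaves open: Theorem~\ref{Sphere perturbation} gives $\GL'(0)=0$, but the sign of $\GL''(0)$ is not computed anywhere, and establishing $\GL''(0)\ge 0$ (equivalently, a local $k=1$ instance of the $1/2$-problem) would already be a new result. The rigidity claim is, as you note, an independent over-determined problem with no obvious reduction to a known symmetry theorem.

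In short, your outline is a reasonable map of the difficulties, consistent with and somewhat more detailed than the paper's own discussion, but it does not close any of the gaps; the conjecture remains open.
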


Suppose that we attempt to prove the $1/6$-conjecture for the small perturbation of a ball. Then it is enough to show that
\beq\label{1/6}
\GL(h):= \sum_{l=-1}^1 \Gl_{1,l}(h) = \frac{1}{2},
\eeq
since then one of $\Gl_{1,-1}$, $\Gl_{1,0}$ and $\Gl_{1,1}$ is larger than (or equal to) $1/6$. Note that $\GL(0)=1/2$ and Theorem \ref{Sphere perturbation} shows that $\GL'(0)=0$. We emphasize that \eqnref{1/6} is a special case (the case when $k=1$) of the $1/2$-problem  discussed in Introduction.

The other conjecture is regarding the Schatten norm of the NP operator:
\begin{conjecture}[Minimizing Schatten norms are achieved by $S^2$]\label{Spectral zeta}
The conjecture claims that for $p>2$
\beq\label{Schatten}
{\rm tr}\{(\Kcal_{\p\GO}^* \Kcal_{\p\GO})^{p/2} \} \ge 2^{-p}(1-2^{-p})\zeta(p-1)
\eeq
for any $C^{\infty}$ simply
connected closed surface $\p\GO$ where $\zeta(s)$ denotes the Riemann zeta function, and the equality is achieved if and only if $\p \GO$ is a sphere.
\end{conjecture}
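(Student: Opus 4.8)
The plan is to prove the conjecture first for domains close to a ball --- the deformations $\p\GO(h)$ of Section~\ref{sec:Hadamard variation} --- and to treat the global statement separately. Write the left-hand side of \eqnref{Schatten} as $\mathcal{F}_p(\p\GO):=\sum_j|\Gl_j|^p$, summed over the NP eigenvalues of $\p\GO$ with multiplicity: after the symmetrizing change of inner product on $H^{-1/2}(\p\GO)$, $(\Kcal^*_{\p\GO}\Kcal_{\p\GO})^{p/2}$ has eigenvalues $|\Gl_j|^p$, and the series converges for $p>2$. By \eqnref{eigensphere} the right-hand side of \eqnref{Schatten} is the value of $\mathcal{F}_p$ at the sphere, so the assertion is that the ball minimizes $\mathcal{F}_p$, uniquely up to rigid motions (the NP spectrum being invariant under rigid motions and dilations). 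For $p>2$ the family $h\mapsto(\Kcal^*_{\p\GO(h)}\Kcal_{\p\GO(h)})^{p/2}$, pulled back to $\p\GO$, is real-analytic with values in the trace class, so $h\mapsto\mathcal{F}_p(\p\GO(h))$ is real-analytic; and near $h=0$ every branch $\Gl_{k,l}(h)$ bifurcating from $\tfrac1{2(2k+1)}>0$ stays positive, so $\mathcal{F}_p(\p\GO(h))=\sum_j\Gl_j(h)^p$.

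The first step is to show that the sphere is a critical point of $\mathcal{F}_p$ for every $p>2$. Grouping the branches by their common value $\Gl_k:=\tfrac1{2(2k+1)}$ at $h=0$,
\[
\frac{d}{dh}\,\mathcal{F}_p(\p\GO(h))\Big|_{h=0}=p\sum_{k\ge1}\Gl_k^{\,p-1}\sum_{l=-k}^{k}\frac{d\Gl_{k,l}}{dh}(0),
\]
with no $k=0$ contribution since $1/2$ is a simple, rigid eigenvalue ($\Gl_{0,0}(h)\equiv 1/2$ for connected $\p\GO$). Each inner sum vanishes by Theorem~\ref{Sphere perturbation}, hence $\mathcal{F}_p'(0)=0$. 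This is the analogue, for the Schatten functional, of the remark after \eqnref{1/6} that Theorem~\ref{Sphere perturbation} makes the $k=1$ eigenvalue sum stationary at the ball.

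The decisive step is the second variation. Since $\Gl_{k,l}(0)=\Gl_k$ is constant within each group,
\[
\frac{d^2}{dh^2}\,\mathcal{F}_p(\p\GO(h))\Big|_{h=0}=p\sum_{k\ge1}\Big[(p-1)\,\Gl_k^{\,p-2}\sum_{l=-k}^{k}\big(\Gl_{k,l}'(0)\big)^2+\Gl_k^{\,p-1}\sum_{l=-k}^{k}\Gl_{k,l}''(0)\Big].
\]
Unlike Theorem~\ref{Sphere perturbation}, this requires more than the trace of the first-variation form on the degree-$k$ eigenspace: it requires the individual $\Gl_{k,l}'(0)$ --- the eigenvalues of the symmetric operator on that eigenspace obtained by polarizing \eqnref{First variation formula}, whose squared Hilbert--Schmidt norm is $\sum_l(\Gl_{k,l}'(0))^2$ --- together with the sums $\sum_l\Gl_{k,l}''(0)$, which by second-order perturbation theory equal ${\rm tr}\big(P_k\,\Kcal''_{\p\GO}(0)\big)-2\,{\rm tr}\big(P_k\,\Kcal'_{\p\GO}(0)\,R_k\,\Kcal'_{\p\GO}(0)\big)$, with $\Kcal_{\p\GO}(h)$ the transported (symmetrized) NP operator, $P_k$ the projection onto the degree-$k$ eigenspace, and $R_k$ the reduced resolvent at $\Gl_k$. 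So one must (a) derive a second Hadamard variation formula for the NP operator, extending Theorem~\ref{Main theorem1}, and (b) prove the requisite spherical-harmonic summation identities --- generalizing Proposition~\ref{formulas of spherical harmonics} to expressions bilinear in the indices $l,l'$ and linear in $a$, hence computable via Gaunt/Clebsch--Gordan coefficients after expanding $a$ in spherical harmonics. The aim is to show the resulting quadratic form in $a$ is positive, vanishing only when $a$ lies in the span of spherical harmonics of degrees $0$ and $1$ (the dilation and translation directions), which would give strict local minimality of the ball with equality exactly at spheres.

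The main obstacle is twofold. First, the second-order computation: the proof of Theorem~\ref{Sphere perturbation} used only the trace $\sum_l\Gl_{k,l}'(0)$ and hence the single clean identity of Proposition~\ref{formulas of spherical harmonics}, whereas the Hessian mixes the full splitting pattern inside each eigenspace with off-diagonal couplings between distinct degrees, so both the spherical-harmonic sums and the final positivity estimate are considerably more involved. Second, and more seriously, the conjecture is global, while the above controls only a neighbourhood of the ball; closing this gap would seem to require sharp two-sided bounds on the NP eigenvalues in terms of the geometry of $\p\GO$. A reasonable intermediate target is the limit $p\to2^+$: since $(p-2)\,\mathcal{F}_p(\p\GO)$ converges to a constant multiple of the leading Weyl coefficient of the NP spectrum --- a curvature functional of $\p\GO$ --- the conjecture in that limit reduces to an isoperimetric-type inequality for that functional, which is a more tractable test case for the global claim.
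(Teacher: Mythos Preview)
The statement you are addressing is a \emph{conjecture}; the paper does not prove it and does not claim to. The paper's entire discussion of Conjecture~\ref{Spectral zeta} consists of the formal computation that the spectral zeta function $\zeta_{\Kcal^*_{\p\GO(h)}}(p)=\sum_{k,l}\Gl_{k,l}(h)^p$ has vanishing first derivative at $h=0$, as an immediate consequence of Theorem~\ref{Sphere perturbation}, together with the explicit caveat that even this step is only formal because term-by-term differentiability of the infinite sum has not been justified. Your ``first step'' reproduces precisely this computation, and you assert the needed analyticity of $h\mapsto\mathcal F_p(\p\GO(h))$ rather than flagging it as formal; that assertion would itself require proof.

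Everything beyond your first step --- the second-variation expansion, the programme for computing $\sum_l(\Gl_{k,l}'(0))^2$ and $\sum_l\Gl_{k,l}''(0)$ via a second Hadamard formula and Gaunt-type identities, the positivity of the Hessian modulo the conformal directions, and the $p\to 2^+$ limiting strategy --- has no counterpart in the paper. You correctly label these as a plan with substantial obstacles (no second variation formula is available, and the conjecture is global while the method is local), so this is an outline of a research programme rather than a proof, and you say as much.

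One genuine slip: you write the left-hand side of \eqnref{Schatten} as $\mathcal F_p(\p\GO)=\sum_j|\Gl_j|^p$. In general ${\rm tr}\{(\Kcal_{\p\GO}^*\Kcal_{\p\GO})^{p/2}\}=\sum_j s_j^p$ with $s_j$ the singular values, and Weyl's inequality only gives $\sum_j|\Gl_j|^p\le\sum_j s_j^p$; the paper itself records this as an inequality. Equality holds on the sphere because there $\Kcal_{\p\GO}^*=\Kcal_{\p\GO}$, but not for a generic perturbation $\p\GO(h)$. Your symmetrization remark does not repair this: the symmetrizing inner product on $H^{-1/2}$ makes $\Kcal_{\p\GO}^*$ self-adjoint there, but $\Kcal_{\p\GO}$ in \eqnref{Schatten} is the $L^2$-adjoint, so $(\Kcal_{\p\GO}^*\Kcal_{\p\GO})^{p/2}$ is not the same object as $|\Kcal_{\p\GO}^*|^p$ in the symmetrized space. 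This does not wreck the strategy --- proving $\sum_j|\Gl_j(\p\GO)|^p\ge\sum_j|\Gl_j(S^2)|^p$ would, via Weyl's inequality and equality on the sphere, still yield \eqnref{Schatten} --- but the identification as stated is incorrect and should be replaced by that two-step argument.
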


We first mention that the right-hand of \eqnref{Schatten} is the Schatten $p$-norm of the NP operator on the sphere. Suppose again that we attempt to prove \eqnref{Schatten} for $\GO(h)$, the perturbation of a ball. Recall that the NP operators in three dimensions are in Schatten class of $p>2$ (see \cite{KPS}). Thus it follows from Weyl's inequality that
$$
\sum_{\Gl_j \in \sigma(\Kcal_{\p\GO}^*)} |\Gl_j|^p\leq {\rm tr}\{(\Kcal_{\p\GO}^* \Kcal_{\p\GO})^{p/2} \} <+\infty.
$$
The spectral zeta function of $\Kcal^{*}_{\p\GO(h)}$ is given by
$$
\zeta_{\Kcal^{*}_{\p\GO(h)}}(p) := \sum_{k=0}^{\infty}\sum_{l=-k}^k \Gl_{k, l}(h)^p .
$$
We then obtain Theorem\ref{Sphere perturbation} that
\begin{align*}
\frac{d}{dh} \zeta_{\Kcal^{*}_{\p\GO(h)}}(p)\Big|_{h=0} &= \sum_{k=0}^{\infty}\sum_{l=-k}^k p \Gl_{k, l}^{p-1}(h) \frac{d}{dh}{\Gl_{k, l}(h)}\Big|_{h=0} \\
&=\sum_{k=0}^{\infty}\sum_{l=-k}^k p \cdot \left( \frac{1}{2(2k+1)} \right)^{p-1} \frac{d}{dh}{\Gl_{k, l}}(0)=0.
\end{align*}
It means that $\zeta_{\Kcal^{*}_{\p\GO(h)}}(p)$ is in equilibrium on $S^2$. We emphasize that the above computations are formally, since we have not proved differentiability of the infinite sum there.


\end{document}